\def\NZQ{\mathbb}               
\def\QQ{{\NZQ Q}}
\def\ZZ{{\NZQ Z}}
\def\RR{{\NZQ R}}
\def\PP{{\NZQ P}}
\def\frk{\mathfrak}               
\def\Phi{{\frk N}}
\def\eb{{\bold e}}
\def\a{\alpha}
\def\opn#1#2{\def#1{\operatorname{#2}}} 
\opn\chara{char} 
\opn\length{\ell} 
\opn\pd{pd} 
\opn\rk{rk}
\opn\projdim{proj\,dim} 
\opn\injdim{inj\,dim} 
\opn\rank{rank}
\opn\depth{depth} 
\opn\grade{grade} 
\opn\height{height}
\opn\embdim{emb\,dim} 
\opn\codim{codim}
\opn\Tr{Tr} 
\opn\bigrank{big\,rank}
\opn\superheight{superheight}
\opn\lcm{lcm}
\opn\trdeg{tr\,deg}
\opn\reg{reg} 
\opn\lreg{lreg} 
\opn\ini{in} 
\opn\lpd{lpd}
\opn\size{size}
\opn\mult{mult}
\opn\dist{dist}
\opn\cone{cone}
\opn\lex{lex}
\opn\rev{rev}
\opn\div{div} \opn\Div{Div} \opn\cl{cl} \opn\Cl{Cl}
\opn\Spec{Spec} \opn\Supp{Supp} \opn\supp{supp} \opn\Sing{Sing}
\opn\Ass{Ass} \opn\Min{Min}
\opn\Ann{Ann} \opn\Rad{Rad} \opn\Soc{Soc}
\opn\Syz{Syz} \opn\Im{Im} \opn\Ker{Ker} \opn\Coker{Coker}
\opn\Am{Am} \opn\Hom{Hom} \opn\Tor{Tor} \opn\Ext{Ext}
\opn\End{End} \opn\Aut{Aut} \opn\id{id} \opn\ini{in}
\opn\nat{nat}
\opn\pff{pf}
\opn\Pf{Pf} \opn\GL{GL} \opn\SL{SL} \opn\mod{mod} \opn\ord{ord}
\opn\Gin{Gin}
\opn\Hilb{Hilb}\opn\adeg{adeg}\opn\std{std}\opn\ip{infpt}
\opn\Pol{Pol}
\opn\sat{sat}
\opn\Var{Var}
\opn\Gen{Gen}
\opn\aff{aff} \opn\con{conv} \opn\relint{relint} \opn\st{st}
\opn\lk{lk} \opn\cn{cn} \opn\core{core} \opn\vol{vol}
\opn\link{link} \opn\star{star}
\opn\gr{gr}
\def\Hc{{\mathcal H}}
\def\Fc{{\mathcal F}}
\def\Pc{{\mathcal P}}
\def\Qc{{\mathcal Q}}
\def\pot#1#2{#1[\kern-0.28ex[#2]\kern-0.28ex]}
\opn\dirlim{\underrightarrow{\lim}}
\opn\inivlim{\underleftarrow{\lim}}
\def\Implies{\ifmmode\Longrightarrow \else
        \unskip${}\Longrightarrow{}$\ignorespaces\fi}
\def\implies{\ifmmode\Rightarrow \else
        \unskip${}\Rightarrow{}$\ignorespaces\fi}
\def\iff{\ifmmode\Longleftrightarrow \else
        \unskip${}\Longleftrightarrow{}$\ignorespaces\fi}
\newtheorem{Theorem}{Theorem}[section]
\newtheorem{Lemma}[Theorem]{Lemma}
\newtheorem{Corollary}[Theorem]{Corollary}
\newtheorem{Proposition}[Theorem]{Proposition}
\newtheorem{Remark}[Theorem]{Remark}
\newtheorem{Example}[Theorem]{Example}
\newtheorem{Definition}[Theorem]{Definition}
\let\epsilon\varepsilon
\let\phi=\varphi
\let\kappa=\varkappa
\opn\dis{dis}
\opn\height{height}
\opn\dist{dist}
\def\pnt{{\raise0.5mm\hbox{\large\bf.}}}
\opn\Lex{Lex}
\begin{document}
\title{Smooth Fano polytopes arising from finite directed graphs}
\author{Akihiro Higashitani}
\thanks{
{\bf 2010 Mathematics Subject Classification:}
Primary 14M25; Secondary 52B20, 05C20. \\
\, \, \, {\bf Keywords:}
smooth Fano polytope, smooth toric Fano variety, finite directed graph, 
centrally symmetric, pseudo-symmetric, K\"ahler--Einstein metric. 
}
\address{Akihiro Higashitani,
Department of Mathematics, Kyoto University, Japan 
Kitashirakawa Oiwake-cho, Sakyo-ku, Kyoto, 606-8502, Japan}
\email{ahigashi@math.kyoto-u.ac.jp}
\begin{abstract}
In this paper, we consider terminal reflexive polytopes arising from finite directed graphs 
and study the problem of deciding which directed graphs yield smooth Fano polytopes. 
We show that any centrally symmetric or pseudo-symmetric smooth Fano polytopes 
can be obtained from directed graphs. 
Moreover, by using directed graphs, we provide new examples of smooth Fano polytopes 
whose corresponding varieties admit K\"ahler--Einstein metrics. 
\end{abstract}
\maketitle

\section*{Introduction}
Let $\Pc \subset \RR^d$ be an integral convex polytope, 
that is, a convex polytope whose vertices have integer coordinates, 
of dimension $d$. We say that $\Pc$ is a {\em Fano polytope} 
if the origin of $\RR^d$ is a unique integer point in the interior of $\Pc$. 
\begin{itemize}
\item 
A Fano polytope is called {\em terminal} 
if every integer point on the boundary is a vertex. 
\item 
A Fano polytope is called {\em reflexive} if its dual polytope is integral. 
Here, the dual polytope of a Fano polytope $\Pc$ is the convex polytope 
consisting of $x \in \RR^d$ such that $\langle x, y \rangle \leq 1$ for all $y \in \Pc$, 
where $\langle x, y \rangle$ is the usual inner product of $\RR^d$. 
When $\Pc$ is reflexive, the corresponding toric Fano variety is {\em Gorenstein}. 
\item 
When $\Pc$ is simplicial, the corresponding toric Fano is {\em $\QQ$-factorial}. 
\item 
A Fano polytope is called {\em smooth} if the vertices of each facet 
form a $\ZZ$-basis of $\ZZ^d$. 
\end{itemize}
In particular, smooth Fano polytopes are always terminal, reflexive and simplicial. 

Fano polytopes have been studied by many people. 
{\O}bro \cite{Oeb} constructed the so-called SFP-algorithm which yields 
the complete classification list of the smooth Fano polytopes of dimension $d$ 
for any given positive integer $d$. Casagrande \cite{Cas} proved that 
the number of vertices of a simplicial reflexive polytope is at most $3d$ when $d$ is even, 
and at most $3d-1$ when $d$ is odd. 
In \cite{NO}, Nill and {\O}bro  classified the simplicial reflexive polytopes 
of dimension $d$ with $3d-1$ vertices. 
Reflexive polytopes of dimension $d$ were classified for $d \leq 4$ by Kreuzer and Skarke \cite{KS98, KS00}. 
The study of the classification of Fano polytopes of dimension three was done 
by Kasprzyk \cite{Kas06, Kas08}. 
The combinatorial conditions for what it implies to be terminal and canonical are explained in Reid \cite{Rei83}. 

In this paper, given a finite directed graph $G$, 
we associate a terminal reflexive polytope $\Pc_G$, 
which has been already defined in \cite{hamiltonian} when $G$ is a tournament graph 
and in \cite{MHNOH} when $G$ is a symmetric directed graph. 
We study the characterization problem of directed graphs 
which yield smooth Fano polytopes (Theorem \ref{main}). 
Moreover, we show that any centrally symmetric or pseudo-symmetric smooth Fano polytope 
can be obtained from a directed graph (Theorem \ref{symmetricsmooth}). 
In addition, as an application of Theorem \ref{main}, 
we provide new examples of smooth Fano polytopes whose corresponding varieties 
admit K\"ahler--Einstein metrics (Example \ref{newex}). 
As we see in many examples in Section 4, smooth Fano polytopes 
arising from directed graphs are helpful to understand 
and useful to consider the combinatorics of smooth Fano polytopes.

\section{Fano polytopes arising from finite directed graphs}

In this section, we construct an integral convex polytope 
associated with a finite directed graph and 
discuss the condition with which the directed graph yields a Fano polytope. 
For most parts of this section, we refer to \cite{HibiH, MHNOH, OhsugiHibi, hamiltonian}. 

Let $G=(V(G),A(G))$ be a finite directed graph 
on the vertex set $V(G)=\{1,\ldots,d\}$ with the arrow set $A(G)$. 
Here an {\em arrow} of $G$ is an ordered pair of two vertices $(i,j)$, 
where $1 \leq i \not= j \leq d$, and the {\em arrow set} $A(G)$ of $G$ is the set of all the arrows of $G$. 
In particular, we regard that $(i,j)$ and $(j,i)$ are distinct arrows. 
We also define an undirected graph $\widetilde{G}$ from a directed graph $G$ as follows: 
$\widetilde{G}$ consists of the vertex set $V(G)$ 
and the edge set $E(G)=\{\{i,j\} \in V(G) \times V(G) : (i,j) \text{ or } (j,i) \in A(G)\}$. 
We call a pair of two vertices without ordering $\{i,j\} \in E(G)$ an {\em edge} of $G$. 

Throughout this paper, we allow that both $(i,j)$ and $(j,i)$ are simultaneously contained in $A(G)$ 
and $\widetilde{G}$ is connected. 


\begin{Definition}{\em 
Let $\eb_1,\ldots,\eb_d$ be the standard basis of $\RR^d$. 
For an arrow $\vec{e}=(i,j)$ of $G$, we define $\rho(\vec{e}) \in \RR^d$ by setting 
$\rho(\vec{e})=\eb_i-\eb_j$. Moreover, we write $\Pc_G \subset \RR^d$ 
for the convex hull of $\{\rho(\vec{e}) : \vec{e} \in A(G)\}$. 
}\end{Definition}

\begin{Remark}{\em 
In \cite{hamiltonian}, $\Pc_G$ is introduced for a tournament graph $G$, 
which is called the {\em edge polytope} of $G$, and some properties on $\Pc_G$ 
are studied in \cite[Section 1]{hamiltonian}. Similarly, in \cite[Section 4]{MHNOH}, 
$\Pc_G$ is defined for a symmetric graph $G$, which is denoted by $\Pc_G^{\pm}$, 
and called the {\em symmetric edge polytope} of $G$. 
}\end{Remark}

Let $\Hc \subset \RR^d$ denote the hyperplane defined by the equation $x_1+\cdots+x_d=0$. 
Since each integer point of $\{\rho(\vec{e}) : \vec{e} \in A(G)\}$ lies on $\Hc$, 
one has $\Pc_G \subset \Hc$. Thus, $\dim(\Pc_G) \leq d-1$. 
First, we discuss the dimension of $\Pc_G$. 
A sequence $\Gamma=(i_1,\ldots,i_l)$ of vertices of $G$ is called a {\em cycle} 
if $i_j \not= i_{j'}$ for $1 \leq j < j' \leq l$ and 
either $(i_j, i_{j+1})$ or $(i_{j+1},i_j)$ is an arrow of $G$ for each $1 \leq j \leq l$, where $i_{l+1}=i_1$. 
In other words, the edges $\{i_1,i_2\},\{i_2,i_3\},\ldots,\{i_l,i_1\}$ form a cycle in $\widetilde{G}$. 
For short, we often write $\Gamma=(\vec{e_1},\ldots,\vec{e_l})$, 
where $\vec{e_j} = (i_j, i_{j+1})$ or $\vec{e_j} = (i_{j+1}, i_j)$ for $1 \leq j \leq l$. 
The length of a cycle is the number of vertices (or edges) forming a cycle. 
For a cycle $\Gamma=(\vec{e_1},\ldots,\vec{e_l})$ in $G$, 
let $\Delta_{\Gamma}^{(+)}=\{ \vec{e_j} \in \{\vec{e_1},\ldots,\vec{e_l}\} : 
\vec{e_j}=(i_j, i_{j+1})\}$ and $\Delta_{\Gamma}^{(-)}=
\{\vec{e_1},\ldots,\vec{e_l}\} \setminus \Delta_{\Gamma}^{(+)}$. 
A cycle $\Gamma$ is called {\em nonhomogeneous} 
if $|\Delta_{\Gamma}^{(+)}| \not= |\Delta_{\Gamma}^{(-)}|$ 
and {\em homogeneous} if $|\Delta_{\Gamma}^{(+)}| = |\Delta_{\Gamma}^{(-)}|$, 
where $|X|$ denotes the cardinality of a finite set $X$. 
We note that two arrows $(i,j)$ and $(j,i)$ form a nonhomogeneous cycle of length two, 
although these do not form a cycle in $\widetilde{G}$. 
We also note that every odd cycle is nonhomogeneous. 
(Here odd (resp. even) cycle is a cycle of odd (resp. even) length.) 
The following result can be proved similarly to \cite[Proposition 1.3]{OhsugiHibi} and \cite[Lemma 1.1]{hamiltonian}.

\begin{Proposition}[{\cite[Proposition 1.3]{OhsugiHibi} and \cite[Lemma 1.1]{hamiltonian}}]\label{dim}
One has $\dim(\Pc_G) = d-1$ if and only if $G$ contains a nonhomogeneous cycle. 
\end{Proposition}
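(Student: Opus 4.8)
The plan is to separate the computation of $\dim(\Pc_G)$ into two pieces: the dimension of the \emph{linear} span of $\{\rho(\vec e):\vec e\in A(G)\}$, and the question of whether the origin lies in its \emph{affine} span. First I would record the elementary fact that for a finite point set whose linear span $L$ has dimension $r$, the affine span has dimension $r$ when $0$ lies in it and dimension $r-1$ otherwise (if $0\in\aff$ then $\aff=L$; if not, then $L=\aff+\RR\cdot 0$ has one extra dimension). Since every $\rho(\vec e)=\eb_i-\eb_j$ lies on $\Hc$, the linear span is contained in $\Hc$; and because $G$ is connected, choosing a spanning tree of the underlying graph exhibits $d-1$ linearly independent vectors of the form $\eb_i-\eb_j$, so the linear span equals $\Hc$ and has dimension $d-1$. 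Hence $\dim(\Pc_G)\in\{d-2,d-1\}$, and $\dim(\Pc_G)=d-1$ if and only if $0$ lies in the affine span of the $\rho(\vec e)$.

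The heart of the argument is then to show that $0$ lies in this affine span precisely when $G$ has a nonhomogeneous cycle. For the ``if'' direction, given a nonhomogeneous cycle $\Gamma=(\vec{e_1},\dots,\vec{e_l})$ on vertices $i_1,\dots,i_l$, the telescoping identity $\sum_{j=1}^{l}(\eb_{i_j}-\eb_{i_{j+1}})=0$ rewrites, after sorting edges by their orientation relative to the cycle, as $\sum_{\vec e\in\Delta_\Gamma^{(+)}}\rho(\vec e)-\sum_{\vec e\in\Delta_\Gamma^{(-)}}\rho(\vec e)=0$. The coefficients sum to $|\Delta_\Gamma^{(+)}|-|\Delta_\Gamma^{(-)}|$, which is nonzero exactly because $\Gamma$ is nonhomogeneous; dividing by this integer exhibits $0$ as an affine combination of the $\rho(\vec e)$, so $0\in\aff$ and therefore $\dim(\Pc_G)=d-1$.

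For the converse I would argue contrapositively and construct a separating affine functional. Assuming $G$ has no nonhomogeneous cycle, I note first that no edge can carry both arrows $(i,j)$ and $(j,i)$, since these form a nonhomogeneous cycle of length two; thus each edge has a single orientation and the following constraints are well posed. I then build a potential $w\in\RR^d$ with $w_i-w_j=1$ for every arrow $(i,j)\in A(G)$: fix a spanning tree, assign an arbitrary value at a root, and propagate the prescribed differences along tree edges. The only possible obstruction is that a fundamental cycle forces contradictory values, but summing the prescribed differences around any cycle telescopes to $|\Delta_\Gamma^{(+)}|-|\Delta_\Gamma^{(-)}|$, which vanishes under the homogeneity hypothesis, so $w$ is well defined. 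Then $\langle w,\rho(\vec e)\rangle=w_i-w_j=1$ for every arrow, so all points lie on the affine hyperplane $\{x:\langle w,x\rangle=1\}$, which misses the origin; hence $0\notin\aff$ and $\dim(\Pc_G)=d-2<d-1$.

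The step I expect to be the main obstacle is the consistency check in the converse: verifying that homogeneity of \emph{every} simple cycle (rather than of a prechosen basis) yields a globally well-defined potential, and carefully tracking arrow orientations against the cycle orientation used to define $\Delta_\Gamma^{(\pm)}$. Everything else—the linear-span computation from connectivity and the single-cycle telescoping relation—is routine, and the overall scheme parallels \cite[Proposition 1.3]{OhsugiHibi} and \cite[Lemma 1.1]{hamiltonian}.
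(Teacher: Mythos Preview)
Your argument is correct; the paper itself does not supply a proof but simply cites \cite[Proposition~1.3]{OhsugiHibi} and \cite[Lemma~1.1]{hamiltonian}, and your scheme---linear span equals $\Hc$ via a spanning tree, then deciding whether $0$ lies in the affine span by the telescoping cycle relation in one direction and a potential function in the other---is precisely the standard argument underlying those references. The ``main obstacle'' you flag is not one: the hypothesis is that \emph{every} cycle of $G$ is homogeneous, so in particular each fundamental cycle is, and consistency on fundamental cycles is all that is required to make the potential $w$ well defined.
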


We assume that $G$ has at least one nonhomogeneous cycle. 

Next, we investigate directed graphs which define Fano polytopes. 
Once we know that $\Pc_G$ is a Fano polytope, 
one can verify that it is terminal and reflexive (\cite[Lemma 1.4 and Lemma 1.5]{HibiH}). 
The following result can be proved similarly to \cite[Proposition 4.2]{MHNOH} and \cite[Lemma 1.2]{hamiltonian}.

\begin{Proposition}[{\cite[Proposition 4.2]{MHNOH} and \cite[Lemma 1.2]{hamiltonian}}]\label{tGF}
An integral convex polytope $\Pc_G \subset \Hc$ is a terminal reflexive polytope of dimension $d-1$ 
if and only if every arrow of $G$ appears in a directed cycle in $G$, 
where a cycle $\Gamma$ is called a directed cycle if either $\Delta_{\Gamma}^{(+)}$ or $\Delta_{\Gamma}^{(-)}$ is empty. 
\end{Proposition}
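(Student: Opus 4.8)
The plan is to reduce the assertion to a statement about the origin and then translate it into graph theory via circulations. Since we have assumed that $G$ has a nonhomogeneous cycle, Proposition~\ref{dim} already guarantees $\dim(\Pc_G)=d-1$, and by \cite[Lemma 1.4 and 1.5]{HibiH} it suffices to prove that $\Pc_G$ is a Fano polytope if and only if every arrow of $G$ lies on a directed cycle; terminality and Gorensteinness are then automatic. First I would pin down the lattice points of $\Pc_G$. Because $\Pc_G \subseteq \con\{\eb_i-\eb_j : i \ne j\}$ and every vertex of the latter has $\ell_1$-norm $2$, any lattice point $x \in \Pc_G$ satisfies $\sum_k x_k = 0$ and $\|x\|_1 \le 2$; an integer vector with these two properties is either $0$ or of the form $\eb_i-\eb_j$. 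Hence the only lattice points of $\Pc_G$ are the origin together with the generators $\rho(\vec e)$, and the latter are vertices of $\Pc_G$, so they lie on the boundary. Consequently $\Pc_G$ is Fano precisely when the origin lies in the relative interior $\relint(\Pc_G)$.

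Next I would characterise when $0 \in \relint(\Pc_G)$. By the standard description of the relative interior of a convex hull, this happens if and only if there are positive reals $\lambda_{\vec e}$, $\vec e \in A(G)$, with $\sum_{\vec e} \lambda_{\vec e}=1$ and $\sum_{\vec e} \lambda_{\vec e}\,\rho(\vec e)=0$. Writing out the coefficient of $\eb_k$ in the latter equation, the condition $\sum_{\vec e}\lambda_{\vec e}\,\rho(\vec e)=0$ says exactly that at every vertex $k$ the total weight on the arrows leaving $k$ equals the total weight on the arrows entering $k$; that is, $(\lambda_{\vec e})$ is a strictly positive circulation on $G$. Thus the whole proposition comes down to the purely combinatorial claim that $G$ admits a strictly positive circulation if and only if every arrow of $G$ lies on a directed cycle.

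For the direction asserting that a positive circulation exists, I would, for each arrow $\vec e$, choose a directed cycle $C_{\vec e}$ through $\vec e$ (using that a directed cycle, traversed forward, gives $\sum_{\vec f \in C_{\vec e}}\rho(\vec f)=0$, so that its indicator is a circulation), and take $(\mu_{\vec e}) = \sum_{\vec e} \mathbf 1_{C_{\vec e}}$, which is a circulation that is strictly positive on every arrow. For the converse I would argue by contraposition: if an arrow $(i,j)$ lies on no directed cycle, then there is no directed path from $j$ to $i$, so the set $R$ of vertices reachable from $j$ contains $j$ but not $i$ and admits no arrow leaving it. For any nonnegative circulation the flow entering $R$ equals the flow leaving $R$, and the latter is $0$; since $(i,j)$ enters $R$, its weight must vanish, so no strictly positive circulation can exist. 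Combining the two directions finishes the equivalence.

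The step I expect to be the main obstacle is the translation layer: making the identification between $0 \in \relint(\Pc_G)$ and the existence of a strictly positive circulation airtight, and then establishing the converse in the combinatorial claim. The cut/reachability argument is the real content, and one must be careful that $R$ is closed under outgoing arrows so that the flow leaving $R$ genuinely vanishes; the remaining ingredients (the lattice-point computation and the relative-interior criterion) are routine.
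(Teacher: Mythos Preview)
Your argument is correct. Note that the paper does not actually supply a proof of this proposition: it only remarks that the statement can be established along the lines of \cite[Proposition~4.2]{MHNOH} and \cite[Lemma~1.2]{hamiltonian}, and records separately (via \cite[Lemmas~1.4 and~1.5]{HibiH}) that terminality and the Gorenstein property come for free once $\Pc_G$ is known to be Fano. Your approach---reducing to the question of whether $0\in\relint(\Pc_G)$, reinterpreting that as the existence of a strictly positive circulation on $G$, and settling the latter via the reachability/cut argument---is exactly the kind of proof one extracts from those references, so there is no substantive divergence to compare. One cosmetic point: in the lattice-point step you should phrase the conclusion as ``the lattice points of $\Pc_G$ lie in $\{0\}\cup\{\rho(\vec e):\vec e\in A(G)\}$'' rather than asserting that the origin \emph{is} one of them, since whether $0$ belongs to $\Pc_G$ at all is precisely what is in question; this does not affect the logic of your argument.
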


Hereafter, we assume that every arrow of $G$ appears in a directed cycle in $G$. 
Notice that by this condition, 
$G$ has a nonhomogeneous cycle since every directed cycle is nonhomogeneous. 

\begin{Example}{\em 
Let $G$ be a directed graph on the vertex set $\{1,2,3\}$ 
with the arrow set $\{(1,2),(2,1),(2,3),(3,1)\}$. Then 
$G$, $\rho(\vec{e})$'s and $\Pc_G$ are as Figure \ref{example1}: 

\begin{figure}[htb!]
\centering
\includegraphics[scale=0.3]{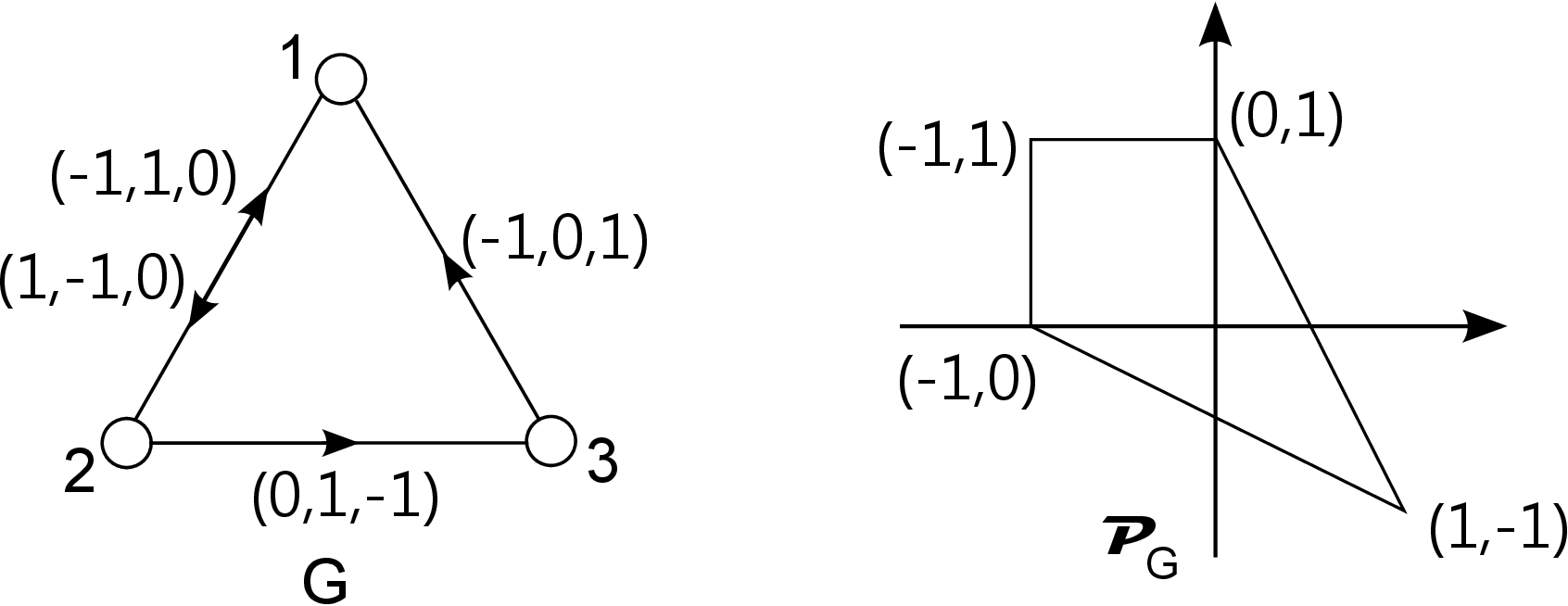}
\caption{}\label{example1}
\end{figure}

\noindent
Remark that the arrows $(1,2),(2,3),(3,1)$ (resp. the arrows $(1,2),(2,1)$) form 
a directed cycle of length three (resp. length two). 
In the picture of $\Pc_G$, we ignore the third coordinate of each integer point. 
Then the convex polytope $\Pc_G$ of this example becomes 
a smooth (in particular, terminal and reflexive) Fano polytope of dimension two. 
}\end{Example}

\begin{Remark}{\em 
In \cite{HibiH}, terminal reflexive polytopes $\Qc_P$ arising from 
finite partially ordered sets $P$ are introduced. 
Let $P=\{y_1,\ldots,y_d\}$ be a partially ordered set 
and $\widehat{P}=P \cup \{y_0,y_{d+1}\}$, where $y_0=\hat{0}$ and $y_{d+1}=\hat{1}$. 
Then we can regard $\hat{P}$ as a directed graph on the vertex set $\{0,1,\ldots,d+1\}$ 
with the arrow set $$\{(i,j) : y_j \text{ covers } y_i \}.$$ 
Identifying $0$ with $d+1$ as the same vertex, we obtain a directed graph $G_P$ 
on the vertex set $\{1,\ldots,d+1\}$. 
Then $\Qc_P$ is nothing but $\Pc_{G_P}$. 
Therefore, terminal reflexive polytopes associated with directed graphs 
are a natural generalization of those defined in \cite{HibiH}. 
We can study these polytopes in Section 2 in a similar way. 
}\end{Remark}

\section{When is $\Pc_G$ smooth ?}

In this section, we consider the problem of which directed graphs yield smooth Fano polytopes. 
First, we prove the following 
\begin{Lemma}\label{lemma}
{\em (a)} 
Let $C=(\vec{e_1},\ldots,\vec{e_l})$ be a cycle in $G$. 
If there exists a facet $\Fc$ of $\Pc_G$ with 
$\{\rho(\vec{e_1}),\ldots,\rho(\vec{e_l})\} \subset \Fc$, then $C$ is homogeneous. \\
{\em (b)} Suppose $(i,j) \in A(G)$ and $(j,i) \in A(G)$. 
If $\rho((i,j))$ is contained in some facet $\Fc$ of $\Pc_G$, then $\rho((j,i))$ does not belong to $\Fc$. 
\end{Lemma}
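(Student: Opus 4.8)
The plan is to exploit the single linear relation that every cycle imposes on the generators $\rho(\vec{e})$, and to evaluate it against the supporting functional of a facet. Since $\Pc_G$ is a Fano polytope, the origin lies in its interior, so no facet contains $0$; hence each facet $\Fc$ admits a linear functional $h_{\Fc}$ on $\Hc$ with $h_{\Fc}(x)=1$ for every $x \in \Fc$ and $h_{\Fc}(x) \le 1$ for every $x \in \Pc_G$. This normalization is all I will use; in particular I do not need the Gorenstein property here.

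First I would record the cycle relation. Write the cycle as $C=(i_1,\ldots,i_l)$ with $e_j=\{i_j,i_{j+1}\}$ and $i_{l+1}=i_1$. For each $j$ set $\varepsilon_j=+1$ if $\vec{e_j}=(i_j,i_{j+1}) \in \Delta_C^{(+)}$ and $\varepsilon_j=-1$ if $\vec{e_j}=(i_{j+1},i_j) \in \Delta_C^{(-)}$; then $\rho(\vec{e_j})=\varepsilon_j(\eb_{i_j}-\eb_{i_{j+1}})$, so that $\varepsilon_j\,\rho(\vec{e_j})=\eb_{i_j}-\eb_{i_{j+1}}$. Summing over $j$, the right-hand side telescopes around the cycle to $0$, which yields
\[
\sum_{\vec{e} \in \Delta_C^{(+)}} \rho(\vec{e}) \;=\; \sum_{\vec{e} \in \Delta_C^{(-)}} \rho(\vec{e}).
\]

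For part (a), suppose all of $\rho(\vec{e_1}),\ldots,\rho(\vec{e_l})$ lie in a facet $\Fc$, so $h_{\Fc}(\rho(\vec{e_j}))=1$ for every $j$. Applying the linear functional $h_{\Fc}$ to both sides of the displayed identity gives $|\Delta_C^{(+)}|=|\Delta_C^{(-)}|$, which is exactly the assertion that $C$ is homogeneous. For part (b), the two arrows $(i,j)$ and $(j,i)$ form a cycle of length two with $\Delta^{(-)}=\emptyset$, hence nonhomogeneous; so part (a) already forbids both $\rho((i,j))$ and $\rho((j,i))$ from lying in the same facet. Equivalently, and more directly, $\rho((j,i))=-\rho((i,j))$, so if $\rho((i,j)) \in \Fc$ then $h_{\Fc}(\rho((j,i)))=-1 \neq 1$, whence $\rho((j,i)) \notin \Fc$.

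I expect no serious obstacle: once the telescoping relation is in hand, each part reduces to a one-line evaluation of a linear functional. The only point requiring a little care is the bookkeeping of signs in the relation—ensuring that the arrows counted in $\Delta_C^{(+)}$ and $\Delta_C^{(-)}$ enter with the correct signs so that the right-hand side telescopes cleanly—but this is routine.
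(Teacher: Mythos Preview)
Your proof is correct and follows essentially the same approach as the paper: both evaluate the supporting linear functional of the facet (the paper writes it in coordinates as $a_1x_1+\cdots+a_dx_d=1$) against the telescoping cycle identity to obtain $|\Delta_C^{(+)}|=|\Delta_C^{(-)}|$, and for (b) the paper gives exactly your second, direct argument $a_i-a_j=1\Rightarrow a_j-a_i=-1$.
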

\begin{proof}
(a) Let $a_1x_1+\cdots+a_dx_d=1$, where each $a_i \in \QQ$, 
denote the equation of the supporting hyperplane of $\Pc_G$ which defines a facet $\Fc$. 
Let $\vec{e_j} \in \{(i_j,i_{j+1}),(i_{j+1},i_j)\}$ for $1 \leq j \leq l$, where $i_{l+1}=i_1$. 
It then follows that $$\sum_{j=1}^l(a_{i_j}-a_{i_{j+1}}) 
=\sum_{\vec{e_j} \in \Delta_C^{(+)}}(a_{i_j}-a_{i_{j+1}}) 
-\sum_{\vec{e_j} \in \Delta_C^{(-)}}(a_{i_{j+1}}-a_{i_j})
=|\Delta_C^{(+)}|-|\Delta_C^{(-)}|=0.$$ 
Hence, $C$ must be homogeneous. 

(b) Similarly, we set $a_1x_1+\cdots+a_dx_d=1$ as above and suppose that 
$\rho((i,j))$ lies on this supporting hyperplane. Then one has $a_i-a_j=1$. Thus, $a_j-a_i=-1$. 
This implies that $\rho((j,i))$ cannot be contained in the same supporting hyperplane. 
\end{proof}

Next, we define two pieces of notation, $\mu_C$ and $\dist_G$. 
Let $C = (\vec{e_1},\ldots,\vec{e_l})$ be a homogeneous cycle in $G$ of length $l$, 
where $\vec{e_j}$ is either $(i_j,i_{j+1})$ or $(i_{j+1},i_j)$ for $1 \leq j \leq l$ with $i_{l+1}=i_1$. 
Then there exists a unique function 
$$\mu_C:\{i_1,\ldots,i_l\} \rightarrow \ZZ_{\geq 0}$$
such that 
\begin{itemize}
\item $\mu_C(i_{j+1})=\mu_C(i_j)-1$ 
(resp. $\mu_C(i_{j+1})=\mu_C(i_j)+1$) 
if $\vec{e_j}=(i_j,i_{j+1})$ (resp. $\vec{e_j}=(i_{j+1},i_j)$) for $1 \leq j \leq l$; 
\item $\min(\{\mu_C(i_1),\ldots,\mu_C(i_l)\})=0$. 
\end{itemize}

For two distinct vertices $i$ and $j$ of $G$, 
the {\em distance} from $i$ to $j$, denoted by $\dist_G(i,j)$, 
is the length of the shortest directed path in $G$ from $i$ to $j$. 
If there exists no directed path from $i$ to $j$, 
then the distance from $i$ to $j$ is defined to be infinity. 

\smallskip

\begin{Theorem}\label{main}
Let $G$ be a connected directed graph on the vertex set $\{1,\ldots,d\}$ 
satisfying that every arrow of $G$ appears in a directed cycle in $G$. 
Then the following conditions are equivalent: \\
{\em (i)} $\Pc_G$ is simplicial; \;\;\;\;\; {\em (ii)} $\Pc_G$ is smooth; \\
{\em (iii)} $G$ possesses no homogeneous cycle $C=(\vec{e_1},\ldots,\vec{e_l})$ such that 
\begin{eqnarray}\label{eq1}
\mu_C(i_a)-\mu_C(i_b) \leq \dist_G(i_a,i_b)
\end{eqnarray}
for all $1 \leq a,b \leq l$, where $\vec{e_j}$ is $(i_j,i_{j+1})$ or $(i_{j+1},i_j)$ 
for $1 \leq j \leq l$ with $i_{l+1}=i_1$. 
\end{Theorem}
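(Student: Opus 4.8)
The plan is to prove the chain of implications $\mathrm{(ii)} \Rightarrow \mathrm{(i)}$, then $\mathrm{(i)} \Rightarrow \mathrm{(iii)}$, and finally $\mathrm{(iii)} \Rightarrow \mathrm{(ii)}$. The first implication is essentially free: by definition a smooth Fano polytope has the property that the vertices of each facet form a $\ZZ$-basis of the lattice, so in particular each facet is a simplex, which means $\Pc_G$ is simplicial, i.e. $\QQ$-factorial. The real content lies in relating the combinatorial condition \eqref{eq1} to the geometry of the facets.

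For $\mathrm{(i)} \Rightarrow \mathrm{(iii)}$ I would argue contrapositively: suppose $G$ has a homogeneous cycle $C = (\vec{e_1},\ldots,\vec{e_l})$ satisfying \eqref{eq1} for all pairs $a,b$, and produce a facet of $\Pc_G$ that is not a simplex, contradicting $\QQ$-factoriality. The function $\mu_C$ is the key device: I would use it to build a candidate supporting hyperplane $a_1 x_1 + \cdots + a_d x_d = 1$ by setting the coefficients $a_i$ in terms of $\mu_C$ on the cycle vertices and extending to all of $V(G)$ using $\dist_G$. Concretely, one wants $a_{i_j} - a_{i_{j+1}} = 1$ for every arrow of $C$ (so that every $\rho(\vec{e_j})$ lies on the hyperplane), which forces the values $a_{i_j}$ to track $\mu_C$ up to an additive constant; condition \eqref{eq1} is precisely what guarantees this assignment can be extended consistently to the remaining vertices so that $a_i - a_j \leq 1$ holds for all arrows $(i,j) \in A(G)$, i.e. so that the hyperplane is genuinely supporting and $\Pc_G$ lies on one side. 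Since $C$ is a cycle of length $l \geq 3$ (a $2$-cycle is nonhomogeneous, and \eqref{eq1} cannot hold for the trivial length-$2$ homogeneous case) whose $l$ vertices $\rho(\vec{e_1}),\ldots,\rho(\vec{e_l})$ all lie on this facet but are affinely dependent (they sum appropriately around the cycle), the facet contains too many vertices to be a simplex in dimension $d-1$, contradicting (i).

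For the converse $\mathrm{(iii)} \Rightarrow \mathrm{(ii)}$, again I would proceed contrapositively: assume $\Pc_G$ is not smooth and manufacture a homogeneous cycle satisfying \eqref{eq1}. If $\Pc_G$ fails to be smooth, some facet $\Fc$ either is not a simplex or its vertices fail to span $\ZZ^d$ over $\ZZ$. By Lemma~\ref{lemma}(a), every cycle whose $\rho$-images lie in a facet is homogeneous, and by Lemma~\ref{lemma}(b) a facet cannot contain both $\rho((i,j))$ and $\rho((j,i))$. I expect the vertex set of any facet, viewed back in $G$, to decompose into edge-disjoint structures, and non-smoothness of $\Fc$ should force the existence of a cycle $C$ among the arrows whose images lie on $\Fc$. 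Reading the supporting hyperplane coefficients $a_i$ of $\Fc$ backwards then recovers $\mu_C$ and, via the supporting inequality $a_i - a_j \leq 1$ for all arrows together with the definition of $\dist_G$ as shortest directed path length, yields the telescoping bound $\mu_C(i_a) - \mu_C(i_b) \leq \dist_G(i_a,i_b)$, which is exactly \eqref{eq1}.

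The main obstacle I anticipate is the bookkeeping in the extension step of $\mathrm{(i)} \Rightarrow \mathrm{(iii)}$: showing that the partial assignment of coefficients dictated by $\mu_C$ on the cycle really extends to a global supporting hyperplane under hypothesis \eqref{eq1}, and then verifying that the resulting face is a genuine facet (of full dimension $d-2$ within $\Hc$) rather than a lower-dimensional face. Establishing that the constructed hyperplane is supporting amounts to checking $a_i - a_j \leq 1$ for every arrow of $G$, where \eqref{eq1} provides exactly the slack needed through the distance function; the subtlety is choosing the extension of the $a_i$ off the cycle optimally (one natural choice is $a_i = \max_j\bigl(a_{i_j} - \dist_G(i,i_j)\bigr)$ or a similar shortest-path potential) and confirming it neither violates the inequality on any arrow nor accidentally places an extra vertex $\rho(\vec{e})$ on the hyperplane in a way that breaks the dimension count. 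Verifying the full-dimensionality of the resulting facet will likely require a separate short argument using the connectedness of $G$ and the hypothesis that every arrow lies on a directed cycle.
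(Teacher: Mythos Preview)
Your overall strategy for the two contrapositive arguments matches the paper's: build a supporting hyperplane from $\mu_C$ extended by shortest-path potentials to kill simpliciality, and conversely read $\mu_C$ and \eqref{eq1} off the coefficients of a bad facet via Lemma~\ref{lemma}. Two remarks on that part. First, your worry about proving the constructed supporting hyperplane cuts out a \emph{facet} is misplaced: a proper face containing the affinely dependent vertices $\rho(\vec{e_1}),\ldots,\rho(\vec{e_l})$ already fails to be a simplex, and every face of a simplicial polytope is a simplex, so a face suffices. Second, your tentative extension $a_i=\max_j\bigl(a_{i_j}-\dist_G(i,i_j)\bigr)$ has the distance in the wrong direction; the telescoping inequality along a directed path from $i_j$ to $k$ forces $a_k\ge a_{i_j}-\dist_G(i_j,k)$, so the paper takes $a_k=\max\bigl(\{a_{i_j}-\dist_G(i_j,k)\}\cup\{0\}\bigr)$ and then must also check compatibility with the bound coming from paths \emph{into} the cycle, which is exactly where hypothesis~\eqref{eq1} is invoked.

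The genuine gap is in your implication $\mathrm{(iii)}\Rightarrow\mathrm{(ii)}$. You correctly note that non-smoothness of a facet $\Fc$ means either $\Fc$ is not a simplex or the vertices of $\Fc$ are a simplex but fail to be a $\ZZ$-basis, and then you assert that ``non-smoothness of $\Fc$ should force the existence of a cycle $C$ among the arrows whose images lie on $\Fc$.'' But your argument only extracts a cycle in the first case (non-simplex); in the second case the $d-1$ arrows of $\Fc$ could form a spanning tree of $G$, containing no cycle at all, and nothing you have written rules this out. The paper does not attempt $\mathrm{(iii)}\Rightarrow\mathrm{(ii)}$ directly for exactly this reason: instead it proves $\mathrm{(iii)}\Rightarrow\mathrm{(i)}$ (which is what your cycle-extraction argument actually establishes) and then proves $\mathrm{(i)}\Rightarrow\mathrm{(ii)}$ separately by observing that the $(d-1)\times d$ matrix whose rows are the vectors $\rho(\vec{e})$ is totally unimodular (it is a signed incidence matrix), so any $(d-1)\times(d-1)$ minor has determinant $0$ or $\pm 1$, and for a simplicial facet the minor is nonzero, hence $\pm 1$. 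That total-unimodularity step is the ingredient your proposal is missing; once you insert it (equivalently: once you prove that the $\rho$-images of the edges of any spanning tree already form a $\ZZ$-basis of $\Hc\cap\ZZ^d$), your chain closes.
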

\begin{proof}
{\bf ((i) $\Rightarrow$ (iii))} 
Suppose that $G$ possesses a homogeneous cycle $C$ satisfying \eqref{eq1} 
and let $C=(\vec{e_1},\ldots,\vec{e_l})$ be such cycle, 
where $\vec{e_j}$ is either $(i_j,i_{j+1})$ or $(i_{j+1},i_j)$ for $1 \leq j \leq l$ with $i_{j+1}=i_1$. 
Then one has $\sum_{j=1}^lq_j\rho(\vec{e_j})=(0,\ldots,0),$ 
where $q_j=1$ (resp. $q_j=-1$) if $\vec{e_j}=(i_j,i_{j+1})$ 
(resp. if $\vec{e_j}=(i_{j+1},i_j)$) for $1 \leq j \leq l$. 
Since $C$ is homogeneous, one has $\sum_{j=1}^lq_j=0$, which implies that 
the integer points $\rho(\vec{e_1}),\ldots,\rho(\vec{e_l})$ are not affinely independent. 

Let $v_j=\rho(\vec{e_j})$ for $1 \leq j \leq l$. 
In order to show that $\Pc_G$ is not simplicial, it suffices to find a face of $\Pc_G$ containing $v_1,\ldots,v_l$. 
Let $a_1,\ldots,a_d$ be integers. We write $\Hc \subset \RR^d$ for the hyperplane 
defined by the equation $a_1x_1+\cdots+a_dx_d=1$ and 
$\Hc^{(+)} \subset \RR^d$ for the closed half space 
defined by the inequality $a_1x_1+\cdots+a_dx_d \leq 1$. 
We will show that for suitable $a_1,\ldots,a_d$, we make $\Hc$ a supporting hyperplane of a face $\Fc$ of $\Pc_G$ 
satisfying $\{v_1,\ldots,v_l\} \subset \Fc$ and $\Pc_G \subset \Hc^{(+)}$. 

First, let $a_{i_j}=\mu_C(i_j)$ for $1 \leq j \leq l$. 
It then follows easily that $v_j$ lies on the hyperplane 
defined by the equation $\sum_{j=1}^la_{i_j}x_{i_j}=1$. 

Next, we determine $a_k$ with $k \in A=\{1,\ldots,d\} \setminus \{i_1,\ldots,i_l\}$. 
We set $$a_k=\max(\{a_{i_j}-\dist_G(i_j,k)\} \cup \{0\}).$$ 
In particular, we have $a_k=0$ when there is no $i_j$ with $\dist_G(i_j,k) < \infty$. 
Here, we notice that one has 
\begin{eqnarray}\label{eq2}
a_k \leq a_k', 
\end{eqnarray}
where $a_k'=\min(\{a_{i_{j'}}+\dist_G(k,i_{j'})\})$. 
In fact, if $a_k > a_k'$, then there are $i_j$ and $i_{j'}$ such that 
$\dist_G(i_j,k) < \infty, \dist_G(k,i_{j'}) < \infty$ and 
$a_{i_j} - \dist_G(i_j,k) > a_{i_{j'}} + \dist_G(k,i_{j'})$. 
Since $\dist_G(i_j,k) + \dist_G(k,i_{j'}) \geq \dist_G(i_j,i_{j'})$, one has 
$$\mu_C(i_j)-\mu_C(i_{j'})=a_{i_j}-a_{i_{j'}} 
> \dist_G(i_j,k) + \dist_G(k,i_{j'}) \geq \dist_G(i_j,i_{j'}).$$ 
This contradicts \eqref{eq1}. 

Now we finish determining the integers $a_1,\ldots,a_d$. Since each $v_j$ lies on $\Hc$, 
in order to show that $\Fc$ is defined by $\Hc$, it suffices to show $\Pc_G \subset \Hc^{(+)}.$ 

Let $(i,j) \in A(G)$. When $i \in \{i_1,\ldots,i_l\}$ and $j \in A$, 
then one has $a_j \geq \max(\{a_i-1,0\})$ by the definition of $a_j$. Hence, $a_i - a_j \leq 1$. 
If $i \in A$ and $j \in \{i_1,\ldots,i_l\}$, then one has $a_i \leq a_j + 1$ by \eqref{eq2}. Hence, $a_i - a_j \leq 1$. Let 
\begin{align*}
&B=\{k \in A : \text{there is } i_j \text{ with } \dist_G(i_j,k) < \infty\} \;\;\text{ and } \;\;\\
&C=\{k \in A : \text{there is } i_{j'} \text{ with } \dist_G(k,i_{j'}) < \infty\}.
\end{align*}
Again, let $(i,j) \in A(G)$. In each of the nine cases below, 
by a routine computation, we can easily show that $\rho((i,j))$ is in $\Hc^{(+)}$. 
\begin{align*}
&(1) \; i \in B \setminus C \;\text{ and }\; j \in B \setminus C; \;\;\; 
&(2) \; i \in C \setminus B \;\text{ and }\; j \in C \setminus B; \\
&(3) \; i \in C \setminus B \;\text{ and }\; j \in B \setminus C; \;\;\; 
&(4) \; i \in C \setminus B \;\text{ and }\; j \in B \cap C; \\
&(5) \; i \in C \setminus B \;\text{ and }\; j \not\in B \cup C; \;\;\;
&(6) \; i \in B \cap C \;\text{ and }\; j \in B \setminus C; \\
&(7) \; i \in B \cap C \;\text{ and }\; j \in B \cap C; \;\;\;
&(8) \; i \not\in B \cup C \;\text{ and }\; j \in B \setminus C; \\
&(9) \; i \not\in B \cup C \;\text{ and }\; j \not\in B \cup C. 
\end{align*}

For example, a routine computation of (1) is as follows. 
When $a_i=0$, since $a_j \geq 0$, one has $a_i-a_j \leq 0 \leq 1$. 
When $a_i >0$, since $a_j \geq a_i -1$, one has $a_i-a_j \leq 1$. 

Therefore, it follows that $\Hc$ is 
a supporting hyperplane of a face of $\Pc_G$ which is not a simplex. 

{\bf ((iii) $\Rightarrow$ (i))} 
Suppose that $\Pc_G$ is not simplicial, i.e., $\Pc_G$ contains a facet $\Fc$ which is not a simplex. 
Let $v_1,\ldots,v_n$ be the vertices of $\Fc$, where $n > d-1$, 
and $\vec{e_1},\ldots,\vec{e_n}$ the arrows with $v_j = \rho(\vec{e_j})$ for $1 \leq j \leq n$. 
We write $\Hc \subset \RR^d$ for the supporting hyperplane $a_1x_1+\cdots+a_dx_d=1$ defining $\Fc$. 
Since $v_1,\ldots,v_n$ are not affinely independent, 
there is $(r_1,\ldots,r_n) \in \RR^n$ with $(r_1,\ldots,r_n) \not= (0,\ldots,0)$ 
satisfying $\sum_{j=1}^nr_j=0$ and $\sum_{j=1}^nr_jv_j=(0,\ldots,0)$. 
By removing $r_j$ with $r_j = 0$, we may assume that $\sum_{j=1}^{n'}r_jv_j=(0,\ldots,0)$, 
where $r_j \not= 0$ for $1 \leq j \leq n'$ with $\sum_{j=1}^{n'}r_j=0$. 
Let $\vec{e_j}=(i_j,i_j')$ with $1 \leq i_j,i_j' \leq d$ and 
let $G'$ denote the subgraph of $G$ with the arrow set $\{\vec{e_1},\ldots,\vec{e_{n'}}\}$. 
If $\deg_{G'}(i_j)=1$ or $\deg_{G'}(i_j')=1$, then $r_j=0$, a contradiction. 
(For a graph $H$ and its vertex $v$, $\deg_H(v)$ denotes the number of arrows $\vec{e}$ in $H$ 
such that $\vec{e}$ looks like $(v,v')$ or $(v',v)$.) 
Thus, $\deg_{G'}(i_j) \geq 2$ and $\deg_{G'}(i_j') \geq 2$. 
By Lemma \ref{lemma} (b), since $\{\rho(\vec{e_1}),\ldots,\rho(\vec{e_{n'}})\} \subset \Fc$, 
it cannot happen that $(i_j,i_j')=(i_k',i_k)$ for some $1 \leq j \not= k \leq n'$. 
Moreover, since every vertex in $G'$ is at least degree two, 
$G'$ is not a tree. Hence $G'$ contains a cycle, which should be homogeneous by Lemma \ref{lemma} (a). 

Let $C=(\vec{e_1},\ldots,\vec{e_l})$ be a homogeneous cycle in $G$, 
where $\vec{e_j}$ is either $(i_j,i_{j+1})$ or $(i_{j+1},i_j)$ for $1 \leq j \leq l$ with $i_{j+1}=i_1$. 
Our goal is to show that $C$ satisfies the inequality \eqref{eq1}. 

Let $\Gamma=(k_0,k_1,\ldots,k_m)$ be a directed shortest path in $G$ 
such that $k_0$ and $k_m$ belong to $\{i_1,\ldots,i_l\}$. 
On the one hand, since $\eb_{k_j}-\eb_{k_{j+1}} \in \Pc_G$, 
one has $a_{k_j}-a_{k_{j+1}} \leq 1$ for $0 \leq j \leq m-1$. 
Hence, $a_{k_0}-a_{k_m} \leq m=\dist_G(k_0,k_m)$. 
On the other hand, we have $a_{k_0}-a_{k_m} = \mu_C(k_0)-\mu_C(k_m).$ 
Thus, $\mu_C(k_0)-\mu_C(k_m) \leq \dist_G(k_0,k_m)$. 
Therefore, the required inequality \eqref{eq1} holds. 

{\bf ((i) $\Rightarrow$ (ii))} 
Suppose that $\Pc_G$ is simplicial. 
Then there are just $(d-1)$ vertices in each facet, which are linearly independent. Let 
$M$ be the $(d-1) \times d$ matrix whose row vectors $v_1,\ldots,v_{d-1} \in \ZZ^d$ are 
the vertices of a facet of $\Pc_G$ 
and $M'$ the $(d-1) \times (d-1)$ submatrix of $M$ 
ignoring the $d$th column of $M$. 
From the theory of totally unimodular matrices \cite{Sch}, 
the determinant of $M'$ is equal to $\pm 1$, 
which means that $\Pc_G$ is smooth. 

{\bf ((ii) $\Rightarrow$ (i))} 
In general, every smooth Fano polytope is simplicial. 
\end{proof}

For a directed graph $G$, we say that $G$ is {\em symmetric} 
if $(j,i)$ belongs to $A(G)$ for every $(i,j) \in A(G)$, that is, $2|E(G)|=|A(G)|$. 
Note that when $G$ is symmetric, every arrow of $G$ is contained 
in a directed cycle of length two, 
so $\Pc_G$ is always a terminal reflexive polytope. 

A connected undirected graph $G$ is called {\em two-connected} 
if the induced subgraph with the vertex set $V(G) \backslash \{i\}$ 
is connected for any $i \in V(G)$. A subgraph is called a {\em two-connected component} of $G$ 
if it is a maximal two-connected subgraph in $G$.

For symmetric directed graphs, we obtain the following 
\begin{Corollary}\label{noeven}
Assume that $G$ is a connected symmetric directed graph. 
Then the following conditions are equivalent: \\
{\em (i)} $\Pc_G$ is simplicial; \;\;\;\;\; {\em (ii)} $\Pc_G$ is smooth; 
\;\; {\em (iii)} $\widetilde{G}$ contains no even cycle; \\
{\em (iv)} every two-connected component of $\widetilde{G}$ is either one edge or an odd cycle. 
\end{Corollary}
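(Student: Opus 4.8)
The strategy is to deduce everything from Theorem \ref{main} by specializing to the symmetric case. The implications (i) $\Leftrightarrow$ (ii) are immediate from Theorem \ref{main}, and (iii) $\Leftrightarrow$ (iv) is a purely graph-theoretic fact about two-connected components, so the real content is showing that condition (iii) of Theorem \ref{main} (no ``bad'' homogeneous cycle) is equivalent to $G$ containing no even cycle. The key simplification in the symmetric setting is that since every arrow $(i,j)$ has its reverse $(j,i)$ present, the distance function collapses: for any two vertices $i,j$ in the same two-connected component we have $\dist_G(i,j)=\dist_G(j,i)$, and in fact $\dist_G$ restricted to an underlying edge is $1$. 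So the defining inequality $\mu_C(i_a)-\mu_C(i_b)\le\dist_G(i_a,i_b)$ becomes far more rigid.

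First I would reduce the notion of ``homogeneous cycle'' in the symmetric graph. Because $G$ is symmetric, I would argue that the cycles $C$ relevant to \eqref{eq1} are exactly the even cycles of the underlying undirected graph, traversed so that arrows alternate in $\Delta_C^{(+)}$ and $\Delta_C^{(-)}$; indeed, a homogeneous cycle needs $|\Delta_C^{(+)}|=|\Delta_C^{(-)}|$, forcing even length. Conversely, given any even undirected cycle, symmetry lets me orient its arrows to realize it as a homogeneous directed cycle in $G$. Thus the existence of an even cycle in $G$ is equivalent to the existence of a homogeneous cycle.

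Next I would verify that every such homogeneous (even) cycle automatically satisfies the inequality \eqref{eq1}, so that the ``no bad cycle'' condition (iii) of Theorem \ref{main} fails precisely when an even cycle exists. The point is that consecutive vertices $i_j,i_{j+1}$ on the cycle are joined by an edge, hence $\dist_G(i_j,i_{j+1})=\dist_G(i_{j+1},i_j)=1$, while $\mu_C$ changes by exactly $\pm1$ along each step; by the triangle inequality for $\dist_G$ and the fact that $\mu_C$ is $1$-Lipschitz along the cycle, one gets $\mu_C(i_a)-\mu_C(i_b)\le\dist_G(i_a,i_b)$ for every pair $a,b$. Combining with the previous paragraph, $\Pc_G$ fails to be $\QQ$-factorial iff $G$ has an even cycle, which gives (i)/(ii) $\Leftrightarrow$ (iii).

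Finally, the equivalence (iii) $\Leftrightarrow$ (iv) is standard structure theory: a two-connected graph on at least three vertices that is not a single cycle contains two internally disjoint paths between some pair of vertices, which can be assembled into an even cycle, so a two-connected component with no even cycle must be a single odd cycle (or a single edge, the two-vertex case); conversely odd cycles and single edges plainly contain no even cycle, and since cycles live inside two-connected components, $G$ has no even cycle iff each such component is an edge or an odd cycle. I expect the main obstacle to be the careful bookkeeping in showing that an arbitrary even undirected cycle can be oriented to become a \emph{homogeneous} directed cycle in $G$ \emph{and} that every homogeneous cycle satisfies \eqref{eq1}; once the distance-$1$ observation on edges is in hand, the Lipschitz/triangle-inequality argument is routine, so the crux is really setting up the orientation correspondence cleanly.
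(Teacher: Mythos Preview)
Your overall strategy is sound and in fact somewhat more economical than the paper's: you deduce the equivalence of (i)--(iii) entirely from Theorem~\ref{main}, whereas the paper invokes Theorem~\ref{main} only for (i)$\Leftrightarrow$(ii) and for (iv)$\Rightarrow$(i), and proves (i)$\Rightarrow$(iii) by a direct geometric construction---given an even cycle $(i_1,\ldots,i_{2l})$, it writes down the supporting hyperplane $x_{i_2}+x_{i_4}+\cdots+x_{i_{2l}}=1$ explicitly and checks that it cuts out a non-simplex face of $\Pc_G$. Your route avoids repeating that piece of the proof of Theorem~\ref{main}, at the price of having to verify \eqref{eq1} for a suitably oriented even cycle.

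One caveat: the ``$1$-Lipschitz plus triangle inequality'' justification you sketch for \eqref{eq1} does not work as written. Knowing that $\mu_C$ changes by $\pm 1$ along cycle edges only shows $|\mu_C(i_a)-\mu_C(i_b)|$ is bounded by the \emph{cycle} distance from $i_a$ to $i_b$, and the triangle inequality for $\dist_G$ bounds $\dist_G(i_a,i_b)$ from \emph{above} by that same cycle distance---the wrong direction. (Indeed, for a non-alternating homogeneous orientation of a $6$-cycle together with a chord, \eqref{eq1} can fail.) The correct argument is simpler and is already implicit in your choice of the \emph{alternating} orientation: with that orientation $\mu_C$ takes only the values $0$ and $1$, so $\mu_C(i_a)-\mu_C(i_b)\le 1\le \dist_G(i_a,i_b)$ for all $a\ne b$, and \eqref{eq1} is immediate. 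Relatedly, your claim that the homogeneous cycles ``are exactly'' the even cycles with alternating orientation is not literally true---there are other homogeneous orientations---but this does not matter, since Theorem~\ref{main}(iii) only requires you to \emph{exhibit} one homogeneous cycle satisfying \eqref{eq1}. Your handling of (iii)$\Leftrightarrow$(iv) matches the paper's.
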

\begin{proof}
{\bf ((i) $\Leftrightarrow$ (ii))} 
This equivalence follows from Theorem \ref{main}. 

{\bf ((i) $\Rightarrow$ (iii))} 
Suppose that $\widetilde{G}$ possesses an even cycle $C$ of length $2l$. 
Let $C=(e_{i_1},\ldots,e_{i_{2l}})$ be a cycle, 
where $e_j=\{i_j,i_{j+1}\}$ for $1 \leq j \leq 2l$ with $i_{2l+1}=i_1$. 
Since $G$ is symmetric, there are arrows 
$(i_2,i_1),(i_2,i_3),(i_4,i_3),(i_4,i_5),\ldots,(i_{2l},i_{2l-1}) \text{ and }(i_{2l},i_1)$ 
in $G$. We define $v_1,\ldots,v_{2l} \in \RR^d$ by setting 
\begin{eqnarray*}
v_j=
\begin{cases}
\rho((i_{j+1},i_j)), \;\;\;\;\;\; &j=1,3,\ldots,2l-1, \\
\rho((i_j,i_{j+1})), &j=2,4,\ldots,2l. 
\end{cases}
\end{eqnarray*}
Then one has $$\sum_{j=1}^lv_{2j-1} - \sum_{j=1}^lv_{2j} = (0,\ldots,0).$$ 
Thus, $v_1,\ldots,v_{2l}$ are not affinely independent. 
Hence, we may show that there is a face $\Fc$ of $\Pc_G$ with $\{v_1,\ldots,v_{2l}\} \subset \Fc$. 

Now, we have $v_{2j-1}=-{\bf e}_{i_{2j-1}}+{\bf e}_{i_{2j}}$ and 
$v_{2j}={\bf e}_{i_{2j}}-{\bf e}_{i_{2j+1}}$ for $1 \leq j \leq l$. 
Thus, $v_1,\ldots,v_{2l}$ lie on the hyperplane $\Hc \subset \RR^d$ 
defined by the equation $x_{i_2}+x_{i_4}+\cdots+x_{i_{2l}}=1$. In addition, 
it is clear that $\rho(\vec{e})$ is contained in $\Hc^{(+)} \subset \RR^d$ for any arrow $\vec{e}$ of $G$. 
Hence, $\Hc$ is a supporting hyperplane defining a face $\Fc$ of $\Pc_G$ 
with $\{v_1,\ldots,v_{2l}\} \subset \Fc$. Therefore, $\Pc_G$ is not simplicial.

{\bf ((iii) $\Rightarrow$ (iv))} 
We prove this implication by elementary graph theory. 
Suppose that there is a two-connected component of $\widetilde{G}$ 
which is neither one edge nor an odd cycle. Let $G'$ be such two-connected subgraph of $\widetilde{G}$. 
Now, an arbitrary two-connected graph with at least three vertices can be obtained by the following method: 
starting from a cycle and repeatedly appending an $H$-path to a graph $H$ 
that has been already constructed. (Consult, e.g., \cite{Wilson}.) 
Since $G'$ is not one edge, $G'$ has at least three vertices. 
Thus, there is one cycle $C_1$ and $(m-1)$ paths $\Gamma_2,\ldots,\Gamma_m$ such that 
$G'=C_1 \cup \Gamma_2 \cup \cdots \cup \Gamma_m$. 
Since $G'$ is not an odd cycle, one has $G'=C_1$, where $C_1$ is an even cycle, or $m > 1$. 
Suppose that $m>1$ and $C_1$ is an odd cycle. 
Let $v$ and $w$ be distinct two vertices of $C_1$ which are intersected with $\Gamma_2$. 
Then there are two paths in $C_1$ from $v$ to $w$. 
Since $C_1$ is odd, the parities of the lengths of such two paths are different. 
By attaching the path $\Gamma_2$ to one or another of such two paths, 
we can construct an even cycle. Therefore, there exists an even cycle. 

{\bf ((iv) $\Rightarrow$ (i))} 
Suppose that each two-connected component of $\widetilde{G}$ is 
either one edge or an odd cycle. 
Then there is no homogeneous cycle in $G$. 
Hence, by Theorem \ref{main}, $\Pc_G$ is simplicial. 
\end{proof}

\section{The case where $\widetilde{G}$ possesses no even cycle} 


In this section, we show that every pseudo-symmetric smooth Fano polytope can be obtained from 
some directed graph whose corresponding undirected graph contains no even cycle. 
This includes the case of centrally symmetric smooth Fano polytopes. 


Let $\Pc \subset \RR^d$ be a Fano polytope. 
\begin{itemize} 
\item We call $\Pc$ {\em centrally symmetric} if $\Pc=-\Pc=\{-\alpha : \alpha \in \Pc\}$. 
\item We call $\Pc$ {\em pseudo-symmetric} 
if there is a facet $\Fc$ of $\Pc$ such that $-\Fc$ is also its facet. 
Note that every centrally symmetric polytope is pseudo-symmetric. 
\item A {\em del Pezzo polytope} of dimension $2k$ is a convex polytope 
$$\con(\{\pm\eb_1,\ldots,\pm\eb_{2k},\pm(\eb_1+\cdots+\eb_{2k})\}),$$ 
whose corresponding variety is called a {\em del Pezzo variety} $V^{2k}$. 
Note that del Pezzo polytopes are centrally symmetric smooth Fano polytopes. 
\item A {\em pseudo del Pezzo polytope} of dimension $2k$ is a convex polytope 
$$\con(\{\pm\eb_1,\ldots,\pm\eb_{2k},\eb_1+\cdots+\eb_{2k}\}),$$ 
whose corresponding variety is called a {\em pseudo del Pezzo variety} $\widetilde{V}^{2k}$. 
Note that pseudo del Pezzo polytopes are pseudo-symmetric smooth Fano polytopes. 
\item Let us say that $\Pc$ {\em splits} into $\Pc_1$ and $\Pc_2$ 
if $\Pc$ is the convex hull of two Fano polytopes 
$\Pc_1 \subset \RR^{d_1}$ and $\Pc_2 \subset \RR^{d_2}$ with $d=d_1+d_2$, i.e., 
by renumbering coordinates, we have 
$$\Pc = \con(\{ (\a_1,0), (0,\a_2) \in \RR^d : \a_1 \in \Pc_1,\a_2 \in \Pc_2 \}).$$ 
\end{itemize}

There is a well-known fact on the characterization of 
centrally symmetric or pseudo-symmetric smooth Fano polytopes. 

\begin{Theorem}[{\cite{VK}}]\label{delPezzo} 
Any centrally symmetric smooth Fano polytope splits into 
copies of the closed interval $[-1,1]$ or a del Pezzo polytope. 
\end{Theorem}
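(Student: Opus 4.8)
The plan is to prove the classification of centrally symmetric smooth Fano polytopes (Theorem~\ref{delPezzo}) by induction on the dimension $d$, exploiting the interplay between the vertices of such a polytope and the lattice structure forced by the smoothness condition. I would first observe that since $\Pc$ is centrally symmetric, its vertex set $V$ is closed under $v \mapsto -v$, so $V$ comes in antipodal pairs. The key structural input is that smoothness forces every facet to have its vertices form a $\ZZ$-basis of $\ZZ^d$; together with central symmetry this severely constrains how vertices can relate. I would fix a facet $\Fc$ with vertex set $\{v_1,\ldots,v_d\}$, which by smoothness is a $\ZZ$-basis, and use it as a coordinate system, so that after a unimodular change of coordinates we may assume $v_i = \eb_i$. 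Then $-\Fc$ is also a facet (central symmetry), with vertices $-\eb_1,\ldots,-\eb_d$.

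The central step is to analyze the remaining vertices of $\Pc$. Writing any other vertex $w$ in the $\eb_i$-coordinates, I would argue using the supporting hyperplane of $\Fc$ (namely $x_1+\cdots+x_d=1$) and the Fano condition (the origin is the unique interior lattice point) that each coordinate of $w$ lies in $\{-1,0,1\}$, and moreover that the pattern of nonzero coordinates is tightly controlled. The heart of the argument is a dichotomy: either the only vertices are $\pm\eb_1,\ldots,\pm\eb_d$, in which case $\Pc$ is the cross-polytope and splits completely into copies of $[-1,1]$; or there exists a vertex $w$ with several nonzero coordinates, and then smoothness of the facets containing $w$ forces those coordinates all to equal $-1$ (or all $+1$) on a distinguished subset $S$ of the basis of even cardinality $2k$, producing a del Pezzo block $\con(\{\pm\eb_i : i \in S\} \cup \{\pm\sum_{i \in S}\eb_i\})$. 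One then checks that $\Pc$ splits as the convex hull of this del Pezzo block with the polytope generated by the remaining coordinates, and applies the induction hypothesis to the smaller centrally symmetric smooth Fano polytope on the complementary coordinates.

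The main obstacle I expect is the splitting step: proving that once a del Pezzo block is isolated on a coordinate subset $S$, the polytope genuinely decomposes as a direct sum with no extra vertices mixing the $S$-coordinates and the complementary ones. This requires showing that any vertex $w$ having nonzero entries both inside and outside $S$ would violate either the smoothness of some facet or the uniqueness of the interior lattice point; the combinatorics of which vertex subsets can form a smooth facet is where the real work lies, and it is here that central symmetry must be used most carefully to rule out the mixed configurations. A secondary difficulty is verifying that the parity of the del Pezzo block is forced to be even, which comes from the requirement that $\sum_{i \in S}\eb_i$ and its negative both appear consistently with smoothness. Since the statement is attributed to~\cite{VK}, I would not reprove every lattice-geometric lemma from scratch but would organize the argument around these two reductions—extracting del Pezzo blocks and peeling off intervals—until the dimension drops to the base case $d \le 1$, where the only centrally symmetric smooth Fano polytope is the interval $[-1,1]$ itself.
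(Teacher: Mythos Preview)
The paper does not prove Theorem~\ref{delPezzo} at all: it is stated with attribution to Voskresenski\v{i}--Klyachko~\cite{VK} and used as a black box in the proof of Theorem~\ref{symmetricsmooth}. There is therefore no ``paper's own proof'' to compare your proposal against.

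As for your sketch itself, the strategy you outline---fix a facet, use smoothness to identify its vertices with the standard basis, then analyze the coordinates of the remaining vertices---is indeed the shape of the classical argument in~\cite{VK} (and Ewald's related work~\cite{Ewald}). Your identification of the two obstacles is accurate: the genuine work in the original proof is precisely (i) showing that a vertex with support $S$ forces a full del Pezzo block on those coordinates with $|S|$ even, and (ii) establishing the splitting, i.e., that no vertex can mix the $S$-coordinates with the complementary ones. You correctly flag these as the hard steps but do not actually carry them out; what you have written is a plausible plan rather than a proof. In particular, the claim that ``each coordinate of $w$ lies in $\{-1,0,1\}$'' and the subsequent dichotomy require nontrivial case analysis of adjacent facets, and the splitting step needs a careful argument about primitive collections (or an equivalent combinatorial invariant) that your sketch does not supply. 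If you intend this to stand as a proof, those two steps must be filled in; if you intend only to indicate why the cited theorem is believable, then a citation to~\cite{VK} (as the paper does) is the appropriate move.
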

\begin{Theorem}[{\cite{Ewald, VK}}]\label{pdelPezzo} 
Any pseudo-symmetric smooth Fano polytope splits into 
copies of the closed interval $[-1,1]$ or a del Pezzo polytope 
or a pseudo del Pezzo polytope. 
\end{Theorem}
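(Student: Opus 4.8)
The plan is to reduce the pseudo symmetric case to the centrally symmetric one already recorded in Theorem \ref{delPezzo}, by decomposing $\Pc$ into coordinate blocks. First I would exploit the pseudo symmetry to fix a facet $\Fc$ together with its antipode $-\Fc$, and then use smoothness: since the vertices of $\Fc$ form a $\ZZ$-basis, after a unimodular change of coordinates I may assume $\Fc = \con\{\eb_1,\ldots,\eb_d\}$, so that $\pm\eb_1,\ldots,\pm\eb_d$ are all vertices of $\Pc$, and $\Fc$, $-\Fc$ lie on the hyperplanes $\sum_i x_i = 1$ and $\sum_i x_i = -1$. Because these are supporting hyperplanes of facets, $\Pc$ is sandwiched in the slab $-1 \le \sum_i x_i \le 1$; as every vertex is a lattice point, its coordinate sum is $-1$, $0$, or $1$, and the vertices with sum $\pm 1$ are exactly $\pm\eb_1,\ldots,\pm\eb_d$ (here I use that a smooth Fano polytope is simplicial, so that $\Fc$ is the simplex on $\eb_1,\ldots,\eb_d$ and carries no other vertex). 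Thus every remaining vertex $w$ lies on $\sum_i x_i = 0$.

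The heart of the argument is to understand these \emph{central} vertices. I would first show, using the $\ZZ$-basis condition at a facet incident to $w$ together with the presence of all the $\pm\eb_i$, that each such $w$ has entries in $\{-1,0,1\}$, hence is of the form $\eb_{i_1}+\cdots+\eb_{i_s}-\eb_{j_1}-\cdots-\eb_{j_s}$ for disjoint index sets of equal size $s$, the equality being forced by $\sum_i w_i = 0$. Next I would prove that the supports of distinct central vertices cannot overlap partially: smoothness at the facets spanned by such vectors together with suitable $\eb_k$ forces the supports to be pairwise equal or disjoint, and in fact each support $S$ carries a single sign pattern $w_S$ up to negation. Consequently $\{1,\ldots,d\}$ is partitioned into blocks: some blocks $S$ of even size $2s$ on which one central vertex $w_S$ (and possibly its antipode $-w_S$) is supported, and the remaining coordinates, which are touched only by the vectors $\pm\eb_i$.

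With this block decomposition in hand the splitting is read off block by block, since a vertex supported in one block has all other coordinates zero, which is precisely the splitting of $\Pc$ into the polytopes carried by the individual blocks. A block consisting of a pair $\pm\eb_i$ with no central vertex touching $i$ contributes a factor $[-1,1]$. A block $S$ of size $2s$ carrying both $w_S$ and $-w_S$ has a vertex set invariant under negation, hence is a centrally symmetric smooth Fano polytope; since $w_S$ uses every coordinate of $S$ it does not split further, so by Theorem \ref{delPezzo} it is a del Pezzo polytope of dimension $2s$. A block carrying only $w_S$ is, after a unimodular change of coordinates on the block, by inspection a pseudo del Pezzo polytope of dimension $2s$. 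Assembling the factors proves the claim, and the centrally symmetric statement is recovered as the case in which no one-sided block occurs.

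The main obstacle is the structural step of the second paragraph: proving that the central vertices have entries in $\{-1,0,1\}$ and that their supports partition cleanly into blocks each carrying a single pattern. Everything else is bookkeeping, but this is where the full strength of smoothness is needed, since without the $\ZZ$-basis condition one could not exclude longer central vectors or incompatibly overlapping supports, and it is exactly this rigidity that separates the del Pezzo and pseudo del Pezzo alternatives from more general simplicial Fano polytopes. I expect the cleanest route to run through a careful analysis of the facets adjacent to $\Fc$ across its ridges, tracking how the defining linear form changes as one crosses from $\Fc$ to a neighbouring facet and reading off the admissible central vertices from that change.
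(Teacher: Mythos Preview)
The paper does not prove Theorem~\ref{pdelPezzo} at all: it is quoted as a known classification result from \cite{Ewald,VK} (just like Theorem~\ref{delPezzo} is quoted from \cite{VK}) and is used as a black box in the proof of Theorem~\ref{symmetricsmooth}. So there is no ``paper's own proof'' to compare your proposal against.

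That said, your sketch is broadly in the spirit of the arguments in the cited references: normalise so that the distinguished facet is the standard simplex, observe that $\pm\eb_i$ are all vertices, and then analyse the remaining vertices lying on $\sum_i x_i=0$ by studying the facets adjacent to $\Fc$ across its ridges. You correctly identify the genuine work as the ``central vertex'' step (entries in $\{-1,0,1\}$ and the clean block decomposition of supports), and your suggested route via ridge-crossings is exactly how Ewald proceeds. Two cautions if you intend to flesh this out. First, your reduction ``to the centrally symmetric one already recorded in Theorem~\ref{delPezzo}'' is not really a reduction: the analysis of the central vertices is the same analysis needed to prove Theorem~\ref{delPezzo} itself, so you are reproving both theorems in parallel rather than deducing one from the other. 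Second, in the block step you should not only show that two central vertices have equal or disjoint supports, but also that on a common support they agree up to sign; this is where the $\ZZ$-basis condition at facets containing both vectors (together with suitable $\eb_k$) is used in full, and it is easy to state but fiddly to pin down. None of this conflicts with the paper, which simply takes the theorem as given.
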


We note that Nill \cite{Nill} studies pseudo-symmetric simplicial reflexive polytopes. 


Somewhat surprisingly, we can give the complete characterization of 
centrally symmetric or pseudo-symmetric smooth Fano polytopes 
by means of directed graphs. 
\begin{Theorem}\label{symmetricsmooth}
{\em (a)} Any centrally symmetric smooth Fano polytope is obtained 
from a symmetric directed graph whose corresponding undirected graph has no even cycle. \\
{\em (b)} Any pseudo-symmetric smooth Fano polytope is obtained 
from a directed graph whose corresponding undirected graph has no even cycle. 
\end{Theorem}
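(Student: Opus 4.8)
The plan is to exploit the structural classification Theorems \ref{delPezzo} and \ref{pdelPezzo} together with the graph-theoretic criterion established in Corollary \ref{noeven}. The strategy reduces to two tasks: first, realize each indecomposable building block (the interval $[-1,1]$, a del Pezzo polytope, and a pseudo del Pezzo polytope) as $\Pc_G$ for an explicit directed graph $G$ with no even cycle; second, show that the splitting operation on Fano polytopes corresponds to a natural operation on directed graphs that preserves the ``no even cycle'' property.

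For the building blocks, I would write down the graphs directly and verify via Corollary \ref{noeven} that each yields a smooth Fano polytope. The interval $[-1,1]$ should arise from the symmetric directed graph on two vertices with both arrows $(1,2)$ and $(2,1)$, whose only two-connected component is a single edge. For the del Pezzo polytope of dimension $2k$, the natural candidate is a symmetric directed graph on $2k+1$ vertices forming a single odd cycle of length $2k+1$: the vectors $\rho(\vec{e})$ for the two orientations of each edge, after identifying the hyperplane $\Hc$ with $\RR^{2k}$, should recover $\pm\eb_1,\ldots,\pm\eb_{2k},\pm(\eb_1+\cdots+\eb_{2k})$ up to a unimodular change of coordinates. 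Since an odd cycle is its own two-connected component, Corollary \ref{noeven} guarantees smoothness. For the pseudo del Pezzo polytope, I expect the same odd cycle but with the symmetric pair of arrows removed from exactly one edge, so that one of $\pm(\eb_1+\cdots+\eb_{2k})$ is dropped; this breaks symmetry to pseudo-symmetry and, being a non-symmetric graph whose underlying two-connected component is still an odd cycle, it will possess no even cycle.

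For the splitting step, suppose $\Pc$ splits into $\Pc_1$ and $\Pc_2$ with $\Pc_i=\Pc_{G_i}$ for directed graphs $G_i$ on disjoint vertex sets, each with no even cycle. I would form $G$ by taking the disjoint union of $G_1$ and $G_2$ and then gluing them at a single common vertex (identifying one vertex of $G_1$ with one vertex of $G_2$), which keeps the graph connected while introducing no new cycles, so no even cycle is created. The key point to check is that this amalgamation at a point realizes precisely the coordinate direct-sum structure of the splitting, since $\Hc$-embeddings add across the shared vertex; the two-connected components of the glued graph are exactly the union of those of $G_1$ and $G_2$, so Corollary \ref{noeven} again applies and $\Pc_G$ is smooth. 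Iterating, an arbitrary splitting into blocks is realized by successively gluing the block graphs at cut vertices.

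The main obstacle I anticipate is the bookkeeping in the del Pezzo realization: confirming that the $2k+2$ vertices $\{\rho(\vec{e})\}$ of the odd-cycle graph, which live on the hyperplane $\Hc$ in $\RR^{2k+1}$, map under a unimodular isomorphism $\Hc\cong\RR^{2k}$ exactly onto the vertex set $\{\pm\eb_1,\ldots,\pm\eb_{2k},\pm(\eb_1+\cdots+\eb_{2k})\}$, and similarly for the pseudo del Pezzo case. This requires choosing the right lattice basis for $\Hc\cap\ZZ^{2k+1}$ and tracking signs carefully around the cycle; once the correct change of coordinates is fixed, the verification is a direct computation. A secondary subtlety is ensuring that the gluing in the splitting step does not accidentally merge arrows or create a directed cycle spanning both blocks, but since we identify only a single vertex and each $G_i$ already has every arrow in a directed cycle internally, the hypotheses of Theorem \ref{main} and Corollary \ref{noeven} are preserved.
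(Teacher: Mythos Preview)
Your approach is essentially identical to the paper's: invoke Theorems \ref{delPezzo} and \ref{pdelPezzo}, realize $[-1,1]$ by a single symmetric edge, the del Pezzo $2k$-polytope by the symmetric odd $(2k{+}1)$-cycle, the pseudo del Pezzo by that cycle with one arrow deleted, and obtain the splitting by gluing these graphs at a single vertex. Two small corrections: the symmetric odd $(2k{+}1)$-cycle has $4k+2$ arrows and hence $4k+2$ polytope vertices, not $2k+2$; and Corollary \ref{noeven} is stated only for \emph{symmetric} directed graphs, so once a pseudo del Pezzo block is present you should instead appeal to Theorem \ref{main} (no even cycle implies no homogeneous cycle) or, as the paper does, simply verify unimodular equivalence to the target $\Pc$, whose smoothness is already part of the hypothesis.
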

\begin{proof}
First, we prove (b). 
Let $\Pc$ be an arbitrary pseudo-symmetric smooth Fano polytope of dimension $d$. 
By Theorem \ref{pdelPezzo}, $\Pc$ splits into $\Pc_1,\ldots,\Pc_m$ 
which are copies of the closed interval $[-1,1]$ or a del Pezzo polytope or a pseudo del Pezzo polytope. 
Let $\Pc_1,\ldots,\Pc_{m'}$ be del Pezzo polytopes, 
$\Pc_{m'+1},\ldots,\Pc_{m''}$ pseudo del Pezzo polytopes and 
$\Pc_{m''+1},\ldots,\Pc_m$ the closed interval $[-1,1]$. 
Then the following easily follow. 
\begin{itemize}
\item Let, say, $\Pc_1$ be a del Pezzo polytope of dimension $2k_1$ and 
$G_1$ a symmetric directed graph with its arrow set 
$$A(G_1)=\{(i,i+1) : 1 \leq i \leq 2k_1\} \cup \{(1,2k_1+1),(2k_1+1,1)\}.$$ 
Then $G_1$ is an odd cycle, i.e., there is no even cycle, 
so $\Pc_{G_1}$ is smooth by Corollary \ref{noeven} and we can check that 
$\Pc_{G_1}$ is unimodularly equivalent to $\Pc_1$. 
\item Let, say, $\Pc_{m'+1}$ be a pseudo del Pezzo polytope of dimension $2k_1$ and 
$G_1'$ a directed graph with its arrow set $$A(G_1')=A(G_1) \setminus \{(2,1)\},$$ 
i.e., we miss one arrow from $G_1$. 
Then we can also check that $\Pc_{G_1'}$ is unimodularly equivalent to $\Pc_{m'+1}$. 
\item A directed graph consisting of only one symmetric edge 
yields the smooth Fano polytope of dimension one, that is, the closed interval $[-1,1]$. 
\end{itemize}
By connecting the above graphs with one vertex, 
we obtain the directed graph whose corresponding undirected graph has no even cycle 
and this yields the required smooth Fano polytope $\Pc$. 

Moreover, del Pezzo polytopes and the closed interval $[-1,1]$ are 
constructed by symmetric directed graphs. Therefore, 
by Theorem \ref{delPezzo}, we can also find the symmetric directed graph $G$ such that $\widetilde{G}$ 
has no even cycle and $\Pc_G$ is unimodularly equivalent to $\Pc$ 
for any centrally symmetric smooth Fano polytope $\Pc$, proving (a). 
\end{proof}

\section{Examples of smooth Fano polytopes $\Pc_G$}

In this section, we provide some interesting examples of smooth Fano polytopes arising from directed graphs. 

\begin{Example}{\em 
Let $G$ be a directed cycle of length $d+1$. Then $\Pc_G$ is a smooth Fano polytope 
whose corresponding toric Fano variety is a $d$-dimensional projective space $\PP^d$. 
The left-hand side (resp. right-hand side)  of the graph in Figure \ref{example_proj} 
yields a smooth Fano polytope which corresponds to $\PP^5$ (resp. $\PP^3 \times \PP^3$). 
Here each two-connected component of a directed graph 
corresponds to each direct factor of the corresponding toric Fano variety.

\begin{figure}[htb!]
\centering
\includegraphics[scale=0.3]{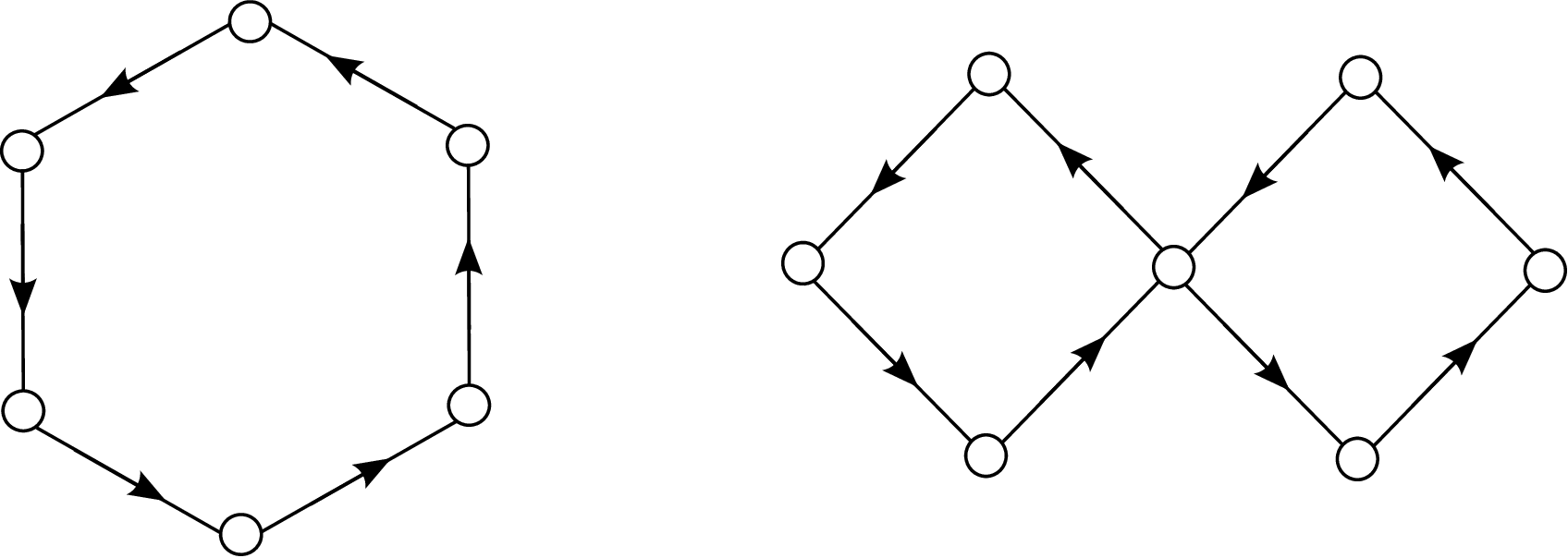}
\caption{directed graphs yielding $\PP^5$ and $\PP^3 \times \PP^3$}\label{example_proj}
\end{figure}
}\end{Example}

\begin{Example}{\em 
(a) When $G$ is a symmetric directed graph without even cycles, $\Pc_G$ is a smooth Fano polytope 
whose corresponding toric Fano variety is a direct product of 
copies of $\PP^1$ or del Pezzo variety $V^{2k}$. (See Section 3.) 
For example, the left-hand side (resp. right-hand side) of the graph in Figure \ref{example_sym} 
yields a smooth Fano polytope which corresponds to $V^4$ 
(resp. $\PP^1 \times \PP^1 \times V^2$). 

\begin{figure}[htb!]
\centering
\includegraphics[scale=0.3]{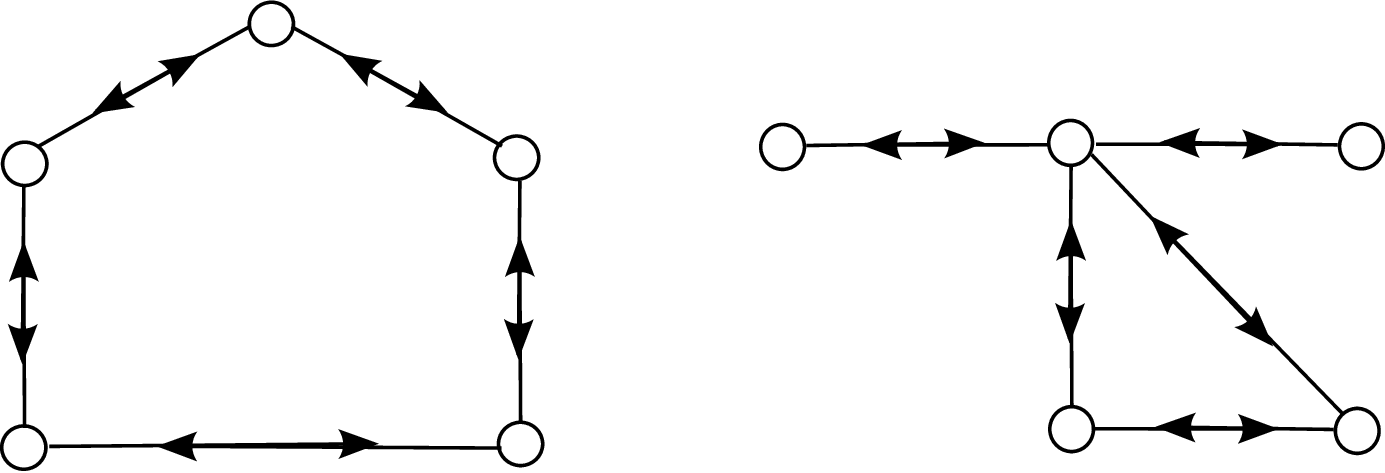}
\caption{directed graphs yielding $V^4$ and $\PP^1 \times \PP^1 \times V^2$}\label{example_sym}
\end{figure}

(b) The left-hand side (resp. right-hand side) of the graph in Figure \ref{example_pseudosym} 
yields a smooth Fano polytope which corresponds to $\widetilde{V}^4$ 
(resp. $\PP^1 \times V^2 \times \widetilde{V}^2$). 

\begin{figure}[htb!]
\centering
\includegraphics[scale=0.3]{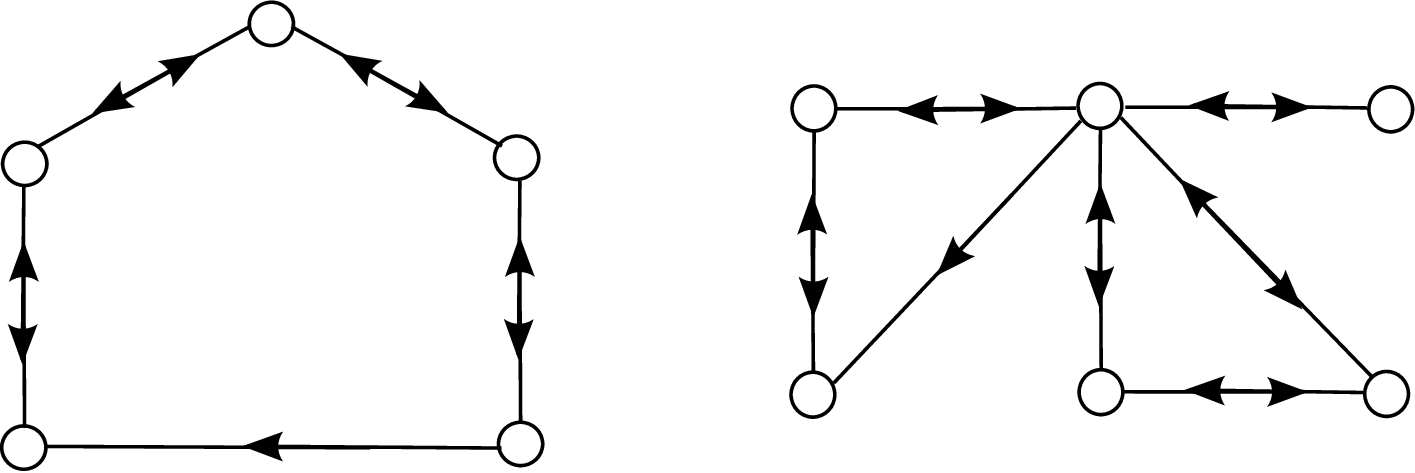}
\caption{directed graphs yielding $\widetilde{V}^4$ and $\PP^1 \times V^2 \times \widetilde{V}^2$}\label{example_pseudosym}
\end{figure}

}\end{Example}

\begin{Example}\label{oldex}{\em 
In \cite{BS}, the definition of a so-called {\em symmetric} 
smooth toric Fano variety is given, which is important from the viewpoint 
whether a smooth toric Fano variety admits a K\"ahler--Einstein metric, 
and some examples of symmetric smooth toric Fano varieties are provided in \cite[Example 4.2 -- 4.4]{BS}. 
(See also \cite{NP}, which gives examples of non-symmetric smooth toric Fano varieties 
admitting K\"ahler--Einstein metric.) 
Note that smooth toric Fano varieties corresponding to centrally symmetric smooth Fano polytopes 
and direct products of copies of projective spaces are symmetric.

Let $m$ be a positive integer and $G_1$ a directed graph 
with its arrow set 
$$A(G_1)=\{(1,2),(2,3),\ldots,(2m+1,2m+2),(2m+2,1),(1,m+2),(m+2,1)\}.$$ 
Then $\Pc_{G_1}$ is a smooth Fano polytope of dimension $2m+1$ 
which corresponds to the example of the case with $k=1$ described in \cite[Example 4.2]{BS}. 

Let $G_2$ be a directed graph with its arrow set 
$$A(G_2)=A(G_1) \cup \{(1,2m+3),(2m+3,1),(m+2,2m+3),(2m+3,m+2)\}.$$ 
Then $\Pc_{G_2}$ is a smooth Fano polytope of dimension $2m+2$ 
which is the example of the case with $k=1$ described in \cite[Example 4.3]{BS}. 
}\end{Example}

\begin{Example}\label{newex}{\em 
By generalizing the above graphs $G_1$ and $G_2$, we obtain a new family of 
symmetric smooth toric Fano varieties. 
For a positive integer $m$ and nonnegative integers $p,q$ with $p \geq q$, let 
$G_{m,p,q}$ denote the directed graph on the vertex set $\{1,\ldots,2m+p+q\}$ with the arrow set 
\begin{align*}
&A(G_{m,p,q})=\{(1,2),(2,3),\ldots,(2m+1,2m+2),(2m+2,1)\} \; \cup \\
&\quad\quad\quad\quad
\{(i_k,i_{k+1}), (i_{k+1}, i_k) : 1 \leq k \leq p\} \; \cup \;
\{(j_\ell, j_{\ell+1}), (j_{\ell+1}, j_\ell) : 1 \leq \ell \leq q \}, 
\end{align*}
where 
\begin{align*}
i_k=
\begin{cases}
1 &\text{if }k=1, \\
2m+1+k &\text{if }k=2,\ldots,p, \\
m+2 &\text{if }k=p+1 
\end{cases}
\; \text{ and }\; j_\ell=
\begin{cases}
1 &\text{if }\ell=1, \\
2m+p+\ell &\text{if }\ell=2,\ldots,q, \\
m+2 &\text{if }\ell=q+1. 
\end{cases}
\end{align*}
Notice that $G_{m,1,0}=G_1$ and $G_{m,2,1}=G_2$. 
It then follows from Theorem \ref{main} that $\Pc_{G_{m,p,q}}$ is a smooth Fano polytope of dimension $2m+p+q-1$ 
if and only if the integers $m,p,q$ satisfy one of the following conditions: 
\begin{align}\label{jouken}
p+q \text{ is odd and } m \geq q > 0 \;\;\text{ or }\;\; m \geq p \text{ and }q=0. 
\end{align}
Here, it is easy to see that $\Pc_{G_{m,p,q}}$ is unimodularly equivalent to the convex hull of 
\begin{align*}
&\eb_1, \eb_2, \ldots,\eb_{2m}, \\
&-(\eb_1+\eb_2+\cdots+\eb_m+\eb_{2m+1}), \; -(\eb_{m+1}+\eb_{m+2}+\cdots+\eb_{2m}-\eb_{2m+1}), \\
&\pm \eb_{2m+2}, \pm \eb_{2m+3}, \ldots, \pm \eb_{2m+p}, \pm (\eb_{2m+1}+\eb_{2m+2}+\eb_{2m+3}+\cdots+\eb_{2m+p}), \\
&\pm \eb_{2m+p+1}, \pm \eb_{2m+p+2}, \ldots, \pm \eb_{2m+p+q-1}, 
\pm (\eb_{2m+1}+\eb_{2m+p+1}+\cdots+\eb_{2m+p+q-1}). 
\end{align*}
Then there exists an automorphism $\sigma_1$ of order 2 defined by 
\begin{align*}
&\sigma_1(\eb_i)=\eb_{i+m}, \; \sigma_1(\eb_{m+i})=\eb_i \;\text{ for }\; 1 \leq i \leq m, \\
&\sigma_1(\eb_j)=-\eb_j \; \text{ for }\; 2m+1 \leq j \leq 2m+p+q-1. 
\end{align*}
There also exists an automorphism $\sigma_2$ of order $m+1$ defined by 
\begin{align*}
&\sigma_2(\eb_i)=\eb_{i+1}, \; \sigma_2(\eb_{m+i})=\eb_{m+i+1} \;\text{ for }\; 1 \leq i \leq m-1, \\
&\sigma_2(\eb_m)=-(\eb_1+\cdots+\eb_m+\eb_{2m+1}), \;\; \sigma_2(\eb_{2m})=-(\eb_{m+1}+\cdots+\eb_{2m}-\eb_{2m+1}), \\
&\sigma_2(\eb_j)=\eb_j \; \text{ for }\; 2m+1 \leq j \leq 2m+p+q-1. 
\end{align*}
Since the common fixed point set of $\sigma_1$ and $\sigma_2$ is only the origin, 
the smooth toric Fano varieties corresponding to $\Pc_{G_{m,p,q}}$, where $m,p,q$ satisfy \eqref{jouken}, 
are symmetric by \cite[Proposition 3.1]{BS}. 
Thus, those admit K\"ahler--Einstein metrics by \cite[Theorem 1.1]{BS}. 
}\end{Example}

\bigskip


\begin{thebibliography}{1}



\bibitem{BS}
V. V. Batyrev and E. N. Selivanova, 
Einstein--K\"{a}hler metrics on symmetric toric Fano varieties, 
{\em J. Reine Angew. Math.} {\bf 512} (1999), 225--236. 


\bibitem{Cas}
C. Casagrande, 
The number of vertices of a Fano polytope, 
{\em Ann. Inst. Fourier} {\bf 56} (2006), 121--130.

\bibitem{Ewald}
G. Ewald, 
On the classification of toric Fano varieties, 
{\em Discrete Comp. Geom.} {\bf 3} (1988), 49--54. 


\bibitem{HibiH}
T. Hibi and A. Higashitani, 
Smooth Fano polytopes arising from finite partially ordered sets, 
{\em Discrete Comp. Geom.} (3) {\bf 45} (2011), 449--461. 

\bibitem{Kas06}
A. M. Kasprzyk,
Toric Fano threefolds with terminal singularities, 
{\em Tohoku Math J.} (2) {\bf 58} (2006), 101--121. 

\bibitem{Kas08}
A. M. Kasprzyk,
Canonical toric Fano threefolds,
{\em Canad. J. Math.} {\bf 62} (2010), 1293--1309. 

\bibitem{KS98}
M. Kreuzer and H. Skarke, 
Classification of polyhedra in three dimensions, 
{\em Adv. Theor. Math. Phys.} {\bf 2} (1998), 853--871. 

\bibitem{KS00}
M. Kreuzer and H. Skarke, 
Complete classification of reflexive polyhedra in four dimensions, 
{\em Adv. Theor. Math. Phys.} {\bf 4} (2000), 1209--1230. 


\bibitem{MHNOH}
T. Matsui, H. Higashitani, Y. Nagazawa, H. Ohsugi and T. Hibi,
Roots of Ehrhart polynomials arising from graphs, 
{\em J. Algebr. Comb.} 34 (2011), no. 4, 721--749. 

\bibitem{Nill}
B. Nill, 
Classification of pseudo-symmetric simplicial reflexive polytopes, 
Algebraic and geometric combinatorics, {\em Contemp. Math.}, vol. 423, 
Amer. Math. Soc., Providence, RI, (2006), 269--282. 

\bibitem{NO}
B. Nill and M. {\O}bro,
$\QQ$-factorial Gorenstein toric Fano varieties with large Picard number,
{\em Tohoku Math J.} (2) {\bf 62} (2010), 1--15. 

\bibitem{NP}
B. Nill and A. Paffenholz, 
Examples of Kahler-Einstein toric Fano manifolds associated to non-symmetric reflexive polytopes, 
{\em Beitr. Algebra Geom.} {\bf 52} (2011), no. 2, 297--304. 

\bibitem{Oeb}
M. {\O}bro,
An algorithm for the classification of smooth Fano polytopes,
arXiv:0704.0049.

\bibitem{OhsugiHibi}
H. Ohsugi and T. Hibi, 
Normal polytopes arising from finite graphs, 
{\em J. Algebra} {\bf 207} (1998), 409--426. 

\bibitem{hamiltonian}
H. Ohsugi and T. Hibi, 
Hamiltonian Tournaments and Gorenstein Rings, 
{\em Europ. J. Comb.} {\bf 23} (2002), 463--470. 

\bibitem{Rei83}
M. Reid, 
Minimal Models of Canonical 3-folds, 
{\em Adv. Stud. Pure Math., Algebraic Varieties and Analytic Varieties} 
{\bf 1} (1983), 131--180. 


\bibitem{Sch}
A. Schrijver, 
``Theory of Linear and Integer Programming,''
John Wiley \& Sons, 1986.




\bibitem{VK}
V. E. Voskresenski\v{i} and A. A. Klyachko, 
Toroidal Fano varieties and root systems, 
{\em Math. USSR Izvestiya} {\bf 24} (1985), 221--244. 


\bibitem{Wilson}
R. J. Wilson, ``Introduction to Graph Theory,'' 
Addison-Wesley, Reading, Fourth Ed., 1996. 

\end{thebibliography}
\end{document}